\theoremstyle{plain}
\newtheorem{theorem}{Theorem}
\newtheorem{corollary}[theorem]{Corollary}
\newtheorem{proposition}[theorem]{Proposition}
\theoremstyle{definition}
\newtheorem{remark}[theorem]{Remark}
\newcommand{\R}{\mathbb R}
\newcommand{\cL}{\mathcal L}
\newcommand{\loc}{{\scriptstyle{loc}}}
\DeclareMathOperator{\diver}{div}
\numberwithin{theorem}{section} \numberwithin{equation}{section}
\title{Superposition of fundamental solutions of second order quasilinear equations}
\author{Jeremy T. Tyson}
\address{Department of Mathematics \\ University of Illinois at Urbana-Champaign \\ 1409 West Green St. \\ Urbana, IL 61801}
\date{\today}
\thanks{The author was supported by NSF Grant DMS-1600650 `Mappings and measures in sub-Riemannian and metric spaces'.}
\begin{document}

\maketitle

\begin{abstract}
We prove a superposition principle in the spirit of Crandall--Zhang and Lindqvist--Manfredi for a class of second order quasilinear equations. For suitable $\alpha$, the $\alpha$-Riesz potentials of nonnegative and compactly supported continuous functions are either subsolutions or supersolutions for the operator $\cL_{p,q} u = \diver(u^{q-1}|\nabla u|^{p-2}\nabla u)$. This class of operators includes both the $p$-Laplacian as well as the stationary porous medium equation.
\end{abstract}

\section{Introduction}

In this paper we establish a superposition principle for supersolutions of a class of second order quasilinear equations. The equation which we consider includes both the $p$-Laplace equation and the stationary porous medium equation as special cases. The aforementioned superposition principle was first established for the $p$-Laplace equation by Crandall and Zhang \cite{cz:another}; see also Lindqvist and Manfredi \cite{lm:remarkable} for an alternate approach and more general conclusions. The observation that such a superposition principle applies also to stationary solutions of the porous medium equation is new and was the initial impetus for this paper.

As a point of departure let us recall the role of the Riesz potential
\begin{equation}\label{eq:Riesz}
I_\alpha(\rho)(x) = c_{n,\alpha} \int_{\R^n} \frac{\rho(y)}{|x-y|^{n-\alpha}} \, dy, \qquad c_{n,\alpha} = \frac{\Gamma\left(\tfrac{n-\alpha}2\right)}{\pi^{n/2}2^\alpha\Gamma(\tfrac\alpha2)},
\end{equation}
in classical potential theory and harmonic analysis. When $n\ge 3$ the operator $I_2$ acts as inverse of the Laplacian: if $\rho$ is a nonnegative and compactly supported continuous function then $(-\triangle)(I_2(\rho)) = \rho$. In particular, $I_2(\rho)$ is a nonnegative superharmonic function. A discrete analog of the preceding observation is the fact that nonnegative superpositions of fundamental solutions,
$$
u(x) = \sum_{j=1}^N A_j |x-a_j|^{2-n}, \qquad a_j \in \R^n, \, A_j \ge 0,
$$
are superharmonic.

In \cite{cz:another}, Crandall and Zhang made a surprising discovery: the preceding superposition principle holds also for the nonlinear $p$-Laplace equation
$$
\triangle_p u = \diver(|\nabla u|^{p-2}\nabla u), \qquad 1<p<\infty.
$$
Specifically, they showed that linear combinations of the form
$$
u(x) = \sum_{j=1}^N A_j |x-a_j|^{(p-n)/(p-1)}
$$
or
$$
u(x) = \sum_{j=1}^N A_j \log(1/|x-a_j|)
$$
with $a_j \in \R^n$ and $A_j \ge 0$, are $p$-sub- or $p$-superharmonic depending on the choice of $p>2$ and $n$. Recall that the fundamental solution of the $p$-Laplace operator $-\triangle_p$ is given by a multiple of the function $x \mapsto |x|^{(p-n)/(p-1)}$ or $x \mapsto \log(1/|x|)$ when $1<p<\infty$.

Lindqvist and Manfredi \cite{lm:remarkable} observed that the discrete result extends also to the continuous (Riesz potential) setting, and generalized the results of Crandall and Zhang to allow for other exponents besides $(p-n)/(p-1)$. The main result of \cite{lm:remarkable} reads as follows.

\begin{theorem}[Lindqvist--Manfredi]\label{th:lm}
Let $\rho$ be a nonnegative and compactly supported continuous function on $\R^n$, $n \ge 3$.
\begin{enumerate}
\item If $2\le p < n$, then $I_{n-\alpha}(\rho)$ is $p$-superharmonic if $0<\alpha\le\tfrac{n-p}{p-1}$.
\item If $p>n$, then $I_{n-\alpha}(\rho)$ is $p$-subharmonic if $-\alpha \ge \tfrac{p-n}{p-1}$. Moreover, $I_{n-\alpha}(\rho)$ is $\infty$-subharmonic if $-\alpha \ge 1$.
\item If $p=n$ then then function $I_n(\rho)(x) = \int_{\R^n} \rho(y) \, \log|x-y| \, dy$ is $n$-subharmonic.
\end{enumerate}
\end{theorem}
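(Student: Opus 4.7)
The strategy is to reduce the integral statement to its discrete analogue for nonnegative finite sums of fundamental-solution-type kernels (the case handled by Crandall--Zhang), and then to pass to the limit using stability of viscosity super- and sub-solutions under locally uniform convergence. Concretely, I would approximate
\[
I_{n-\al}(\rho)(x) = c_{n,n-\al}\int_{\R^n}\rho(y)\,|x-y|^{-\al}\,dy
\]
(with $|x-y|^{-\al}$ replaced by $\log|x-y|$ in case (3)) by Riemann sums
\[
u_N(x) = \sum_{j=1}^N A_j^{(N)}|x-a_j^{(N)}|^{-\al}, \qquad A_j^{(N)} = c_{n,n-\al}\,\rho(a_j^{(N)})\,|Q_j^{(N)}|\ge 0,
\]
indexed by a partition $\{Q_j^{(N)}\}$ of $\operatorname{supp}(\rho)$ with $\diam Q_j^{(N)}\to 0$. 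Uniform continuity of $\rho$ together with uniform boundedness of the kernel off a neighborhood of the diagonal yields $u_N\to I_{n-\al}(\rho)$ locally uniformly on $\R^n \setminus \operatorname{supp}(\rho)$, and a dyadic refinement scheme provides the monotone approximation needed for the viscosity machinery near $\operatorname{supp}(\rho)$.

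The heart of the argument is the discrete assertion. At any point $x_0 \notin \{a_j^{(N)}\}$, $u_N$ is $C^\infty$, and I would use the expansion
\[
\triangle_p u_N = |\nabla u_N|^{p-2}\,\triangle u_N + (p-2)|\nabla u_N|^{p-4}\,\langle D^2 u_N\,\nabla u_N,\nabla u_N\rangle
\]
together with the single-building-block identity
\[
\triangle_p(|x-a|^{-\al}) = \al^{p-1}|x-a|^{-\al(p-1)-p}\bigl[(\al+2-n)+(p-2)(\al+1)\bigr],
\]
whose bracket vanishes exactly at the critical exponent $\al = (n-p)/(p-1)$. Writing $b_j := A_j|x_0-a_j|^{-\al-2}$, $r_j := |x_0-a_j|$, and $\hat n_j := (x_0-a_j)/r_j$, both $\triangle u_N(x_0)$ and $\langle D^2 u_N\,\nabla u_N,\nabla u_N\rangle(x_0)$ become explicit quadratic expressions in the data $(b_j,r_j,\hat n_j)$, and a weighted Cauchy--Schwarz inequality of the form $|\sum_j b_j r_j\hat n_j|^2 \le \bigl(\sum_j b_j\bigr)\bigl(\sum_j b_j r_j^2\bigr)$ extracts a definite sign for $\triangle_p u_N(x_0)$ in each of the parameter ranges (1)--(3). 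At the poles, in case (1) one has $u_N = +\infty$ so the viscosity supersolution condition is vacuous, while in case (2) the condition $-\al>0$ makes $u_N$ continuous everywhere and the subsolution property can be checked directly. The $\infty$-subharmonicity claim in (2) comes out of the same Cauchy--Schwarz computation applied to $\langle D^2 u_N\,\nabla u_N,\nabla u_N\rangle$ alone.

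The principal obstacle is the algebraic step at smooth points of $u_N$. At the critical exponent $\al = (n-p)/(p-1)$ each individual summand is $p$-harmonic off its pole, so $\triangle_p u_N(x_0)$ consists entirely of genuine cross terms among distinct poles, whose sign analysis is the delicate Crandall--Zhang computation. For the subcritical ranges in (1) and (2) one must additionally absorb a ``diagonal'' contribution coming from each single building block (for which the bracket above no longer vanishes) and verify that its sign is compatible with the cross terms; the resulting expression turns out to be monotone in $\al$, which makes the stated ranges sharp. The logarithmic case (3) is the limit $\al \to 0$ of this computation, formally via $|x-y|^{-\al} = 1 - \al\log|x-y| + O(\al^2)$.
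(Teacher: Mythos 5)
Your route is genuinely different from the paper's: you reduce the Riesz potential to a discrete superposition and invoke Crandall--Zhang plus a limit, whereas the paper (and Lindqvist--Manfredi) compute $\triangle_p u$ directly for $u=I_{n-\al}(\rho)$ as a sum of multiple integrals and extract the sign integral-by-integral. That reversal of emphasis is plausible in principle, but there are two real gaps as written. First, the approximation step: in case (1) the Riemann sums $u_N$ equal $+\infty$ at their poles while $u=I_{n-\al}(\rho)$ is finite and continuous on $\operatorname{supp}(\rho)$, so locally uniform convergence fails on the support; a ``dyadic refinement scheme'' does not obviously make $u_N$ monotone in $N$, and the stability theorem you would need (Kilpel\"ainen--Mal\'y, Theorem~1.17 in \cite{km:degenerate}, which the paper itself invokes for Theorem~\ref{th:radon}) is for monotone increasing sequences of $p$-superharmonic functions. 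The clean way around this is to approximate the density rather than the potential — mollify $\rho$, as the paper does for Radon measures — which keeps every approximant $C^2$ with no spurious poles.

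Second, and more substantively, the algebraic heart is not actually supplied. The weighted Cauchy--Schwarz $\bigl|\sum_j b_j r_j\hat n_j\bigr|^2\le\bigl(\sum_j b_j\bigr)\bigl(\sum_j b_j r_j^2\bigr)$ only compares $|\nabla u_N|^2$ with $u_N\,\triangle u_N$-type quantities; it does not by itself control the $\infty$-Laplacian term $\langle D^2u_N\,\nabla u_N,\nabla u_N\rangle$, which is cubic in the data $(b_j,r_j,\hat n_j)$ and carries the difficulty. The sign extraction in Crandall--Zhang and in the paper's proof of Theorem~\ref{th:main} rests on first splitting the coefficient of the $\triangle u$ term into two pieces and then applying the Lagrange-type identity $|U|^2\langle V,W\rangle-\langle U,V\rangle\langle U,W\rangle=\langle U\wedge V,U\wedge W\rangle$ to the combination with the $\triangle_\infty$ term, so that one piece becomes a sum of squares and the other a sum of squared wedge products. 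Your Cauchy--Schwarz recovers only the first piece; without the wedge-product identity (or an equivalent) the proposal does not close. (A smaller issue: in case (2) with $0<-\al<1$ the summands $|x-a_j|^{-\al}$ are $C^0$ but not $C^1$ at the poles, so ``the subsolution property can be checked directly'' there needs an argument — e.g.\ a comparison-principle check — rather than pointwise evaluation of $\triangle_p$.)
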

Note the restriction to $p \ge 2$. We refer the reader to \cite[p.\ 134]{lm:remarkable} for a detailed discussion of the role of this assumption.

A related superposition principle for elliptic second order partial differential operators was considered by Laugesen and Watson in \cite{lw:green}. In \cite{gt:h-type} Theorem \ref{th:lm} was generalized to the setting of step two Carnot groups of Heisenberg type, for the subelliptic $p$-Laplace equation. In that setting the relevant Riesz potentials are defined by convolving with radial kernels defined using Kaplan's anisotropic homogeneous norm.

The starting point for this paper was the observation that a similar superposition principle holds for stationary solutions of the porous medium equation (PME). That is, if we set
$$
\cL_q u = \diver(\nabla(u^q)), \qquad q>0,
$$
then Riesz potentials $I_{n-\alpha}(\rho)$ of nonnegative and compactly supported continuous functions $\rho$ are either subsolutions or supersolutions for the operator $-\cL_q$ depending on the choice of $q$, $n$ and $\alpha$. This assertion is a special case of our main theorem (Theorem \ref{th:main}) stated below. The theory of the porous medium equation
$$
u_t = \diver(\nabla(u^q))
$$
is comprehensively developed in the books by V\'azquez \cite{v:book} and DiBenedetto \cite{db:book}. The PME is a standard model for a variety of physically relevant systems, including flow of gas through a porous medium and heat radiation in plasmas.

We will establish such a superposition principle for a class of second order quasilinear equations which includes both the $p$-Laplace equation and the stationary porous medium equation. Throughout this paper we fix
\begin{equation}\label{standing-assumptions}
n \ge 3, \quad q>0, \quad \mbox{and} \quad p \ge 2
\end{equation}
and consider the operator
\begin{equation}\label{eq:L}
\cL_{p,q} u := \diver(u^{q-1}|\nabla u|^{p-2} \nabla u)
\end{equation}
acting in $\R^n$. This is the stationary form of the equation which is called the {\it doubly nonlinear diffusion equation} in \cite{v:book}. Manfredi and Vespri \cite{mv:doubly}, in their study of the asymptotics of solutions of the doubly nonlinear diffusion equation, write `it seems interesting to see if and how many of the properties of the solutions of the porous media and $p$-Laplacian equations are preserved in this more general case'. This paper contributes to such a program of research.

The operator $\cL_{p,q}$ is closely related to the $p$-Laplace operator $\triangle_p$. For $p$ and $q$ as above we set
\begin{equation}\label{m}
m = \frac{p-2+q}{p-1}
\end{equation}
and note that $m>0$. A straightforward computation, valid for nonnegative $C^2$ functions $u$, gives
\begin{equation}\label{p-Laplace-and-L-p-q}
\triangle_p(u^m) = m^{p-1} \, \cL_{p,q}(u).
\end{equation}
By applying \eqref{p-Laplace-and-L-p-q}, or by a direct computation, one observes that the fundamental solution for $-\cL$ is a multiple of
\begin{equation}\label{eq:L-fs}
x \mapsto |x|^{(p-n)/(p-2+q)}
\end{equation}
or $x \mapsto \log(1/|x|)$ if $p=n$. See Proposition \ref{prop:fs} for a precise statement.

A function $u$ is said to be a {\it (weak) supersolution} for the operator $-\cL_{p,q}$ if
\begin{equation}\label{integrability-assumptions}
|u| \in L^{mp}_\loc, \quad |u|^{m-1}\,|\nabla u| \in L^p_\loc,
\end{equation}
and for all nonnegative functions $\varphi \in C^\infty_0(\Omega)$,
$$
\int_\Omega |u|^{q-1} |\nabla u|^{p-2} \langle \nabla u,\nabla \varphi \rangle \ge 0.
$$
Compare, e.g., \cite{mv:doubly}. Subsolutions are defined similarly by reversing the inequalities. With the above definition in place it is easy to confirm that $u$ is a nonnegative supersolution for the operator $-\cL_{p,q}$ if and only if $u^m$ is a nonnegative supersolution for the $p$-Laplace operator $-\triangle_p$.

The main result of this paper is the following.

\begin{theorem}\label{th:main}
Let $\rho$ be a nonnegative and compactly supported continuous function in $\R^n$.
\begin{enumerate}
\item If $2\le p<n$ and $q\ge 1$ then
\begin{equation}\label{eq:main-u}
u:=I_{n-\alpha}(\rho)
\end{equation}
is a supersolution for the operator $-\cL_{p,q}$ if
$$
0<\alpha \le \frac{n-p}{p-2+q}.
$$
\item If $p>n$ and $0<q\le 1$ then $u:=I_{n-\alpha}(\rho)$ is a subsolution for $-\cL_{p,q}$ if
$$
-\alpha \ge \frac{p-n}{p-2+q}.
$$
If $p=\infty$ we may choose $-\alpha \ge 1$.
\item If $p=n$ and $q>0$ then $u(x):=\int \log|x-y| \, \rho(y) \, dy$ is a supersolution for $-\cL_{n,q}$.
\end{enumerate}
\end{theorem}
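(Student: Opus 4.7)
The guiding principle is the identity \eqref{p-Laplace-and-L-p-q}, which for nonnegative smooth $u$ makes ``$u$ is a strong $\cL_{p,q}$-supersolution'' equivalent to ``$u^m$ is a strong $p$-supersolution.'' Since $(I_{n-\alpha}(\rho))^m$ is not itself a Riesz potential when $m \neq 1$, Theorem \ref{th:lm} cannot be applied off the shelf. My plan is instead to emulate the Lindqvist--Manfredi approach directly for the quasilinear operator $\cL_{p,q}$ by working with the integral representation of $u$.

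First I would mollify $\rho$ and appeal to stability of weak sub/supersolutions under uniform convergence to reduce to the case where $\rho$ is smooth and compactly supported. Then $u = I_{n-\alpha}(\rho) \in C^\infty(\R^n)$ is strictly positive, and the assertion reduces to the pointwise inequality $-\cL_{p,q}u \ge 0$ on the complement of the critical set $\{\nabla u = 0\}$ (where the operator vanishes thanks to $p \ge 2$). I would then differentiate under the integral sign to express $\nabla u(x)$ and $D^2u(x)$ as vector- and tensor-valued integrals against $\rho(y)\,dy$ with kernels derived from $(x-y)/|x-y|^{\alpha+2}$, and substitute into the expansion
\[\cL_{p,q}u = (q-1)u^{q-2}|\nabla u|^p + u^{q-1}\bigl[|\nabla u|^{p-2}\triangle u + (p-2)|\nabla u|^{p-4}\langle D^2u\,\nabla u,\nabla u\rangle\bigr].\]

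The hardest step is the sign verification. A direct calculation for the single fundamental solution $\Phi_a(x) = |x-a|^{-\alpha}$ yields $-\cL_{p,q}\Phi_a \ge 0$ outside $a$ precisely when $\alpha \le (n-p)/(p-2+q)$, matching Proposition \ref{prop:fs}. However $\cL_{p,q}$ is nonlinear, so this single-source bound does not pass to superpositions by summation. The needed tool is an integrand-level estimate (in the spirit of LM) that controls the weighted average of $\langle D^2u\,\nabla u,\nabla u\rangle/|\nabla u|^2$ against $\rho(y)\,dy$ via a Cauchy--Schwarz or Jensen-type manipulation inside the $y$-integral. In case (1), the hypotheses $p \ge 2$ and $q \ge 1$ are precisely what guarantee that the extra factor $u^{q-1}$ and the new term $(q-1)u^{q-2}|\nabla u|^p$ have signs compatible with supersolution, while the exponent bound $\alpha \le (n-p)/(p-2+q)$ is the sharp threshold at which the fundamental-solution inequality is saturated. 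Cases (2) and (3) follow by symmetric arguments, with inequalities reversed in case (2) and the logarithmic kernel in case (3) treated as the $\alpha \to 0$ borderline of the power kernel.
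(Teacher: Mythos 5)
Your setup matches the paper's: compute $\cL_{p,q}u$ for $u=I_{n-\alpha}(\rho)$ by differentiating under the integral sign, use the expansion $\cL_{p,q}u = u^{q-2}|\nabla u|^{p-4}(u|\nabla u|^2\triangle u + (p-2)u\triangle_\infty u + (q-1)|\nabla u|^4)$, and reduce to a pointwise sign verification since $p\ge2$. (The mollification of $\rho$ is unnecessary; continuity plus compact support already give $u\in C^2$ because $\alpha<n-2$ in all cases.) However, the heart of the argument --- the sign verification --- is entirely absent, and your sketch of why it should go through is misleading.

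First, the reference to ``a Cauchy--Schwarz or Jensen-type manipulation inside the $y$-integral'' does not identify a workable mechanism. What the proof actually requires is to express the bracketed quantity $I$ as a sum of three \emph{quadruple} integrals $I_1+I_2+I_3$ (each manifestly nonnegative as an iterated integral of a square), split $I_1=I_{1,1}+I_{1,2}$ by inserting the precise auxiliary parameter $\lambda=-(p-2)(\alpha+2)$, symmetrize $I_{1,1}$ in two of the integration variables, and then combine $I_{1,2}+I_2$ via the Lagrange identity $|U|^2\langle V,W\rangle-\langle U,V\rangle\langle U,W\rangle=\langle U\wedge V,U\wedge W\rangle$ and $I_{1,1}+I_3$ via the polarization identity $\langle x-y,x-w\rangle=\tfrac12(|x-y|^2+|x-w|^2-|y-w|^2)$. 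Nothing in ``Cauchy--Schwarz/Jensen inside the $y$-integral'' points toward this choice of $\lambda$, the symmetrization, or the wedge-product trick, and without them the sum $I_1+I_2+I_3$ is a mix of signs that does not obviously close.

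Second, your claim that $p\ge2$, $q\ge1$ ``are precisely what guarantee that \ldots the new term $(q-1)u^{q-2}|\nabla u|^p$ has sign compatible with supersolution'' has the sign analysis backward. In case (1) one needs $\cL_{p,q}u\le0$, yet for $q\ge1$ the $(q-1)$-term (and, for $\alpha>-2$, the $(p-2)$-term) contribute \emph{nonnegatively}. Far from being automatically compatible, these are exactly the terms that must be absorbed: the polarization identity converts part of the $(q-1)$-contribution into a favorably signed term (producing the combination $\alpha(p-2+q)+(p-n)$ whose sign encodes the hypothesis $\alpha\le(n-p)/(p-2+q)$), while the $(p-2)$-contribution is absorbed by the $I_{1,2}$ piece through the wedge-product identity. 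The restrictions $q\ge1$ in case (1) and $0<q\le1$ in case (2) arise because they force the residual $-\tfrac12(q-1)\alpha^4\int|y-w|^2\cdots$ term to have the same sign as the others; this is the ``main subtlety'' the paper flags, and it is precisely what your proposal leaves unaddressed.
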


In view of the regularity properties of the function $u$ in the theorem, the conclusion can also be stated by saying that $u^m$ (with $m$ as in \eqref{m}) is either $p$-superharmonic or $p$-subharmonic. By an approximation argument, the latter conclusion continues to hold also for Riesz potentials of general Radon measures satisfying a suitable growth condition. See Theorem \ref{th:radon}. Applying the latter conclusion to finite sums of Dirac masses, or by repeating the proof of Theorem \ref{th:main} in the discrete setting, we obtain the following corollary.

\begin{corollary}\label{cor:main}
Let
$$
u(x) = \sum_{j=1}^N A_j |x-a_j|^{-\alpha}
$$
for points $a_1,\ldots,a_N \in \R^n$ and nonnegative coefficients $A_1,\ldots,A_N$. Then the following conclusions hold.
\begin{enumerate}
\item If $2\le p<n$, $q\ge 1$ and $\tfrac{p-n}{p-2+q} \le \alpha < 0$ then $u^m$ is $p$-superharmonic.
\item If $p>n$, $0<q\le 1$ and $-\alpha \ge \tfrac{p-n}{p-2+q}$ then $u^m$ is $p$-subharmonic. If $p=\infty$ and $0<q\le 1$ then we may choose any $\alpha$ satisfying $-\alpha \ge 1$.
\item If $p=n$ and $q>0$ then $u(x) = \sum_j A_j \log(1/|x-a_j|)$ is $n$-superharmonic.
\end{enumerate}
\end{corollary}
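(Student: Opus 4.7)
The plan is to derive the corollary directly from Theorem~\ref{th:radon}, the Radon--measure version of Theorem~\ref{th:main} indicated in the paragraph preceding the corollary. First I would recognize the function $u(x)=\sum_{j=1}^N A_j|x-a_j|^{-\alpha}$ appearing in parts~(1) and~(2) as, up to the normalizing constant $c_{n,n-\alpha}$ of~\eqref{eq:Riesz}, the Riesz potential $I_{n-\alpha}(\mu)$ of the finite atomic measure $\mu=\sum_j A_j'\,\delta_{a_j}$; in part~(3), the analogous identification with the logarithmic potential in place of the Riesz potential recovers $u(x)=\sum_j A_j\log(1/|x-a_j|)$. Since $\mu$ is a compactly supported, finite Radon measure, any sensible growth condition of the form $\mu(B(x,r))\le Cr^s$ is satisfied trivially, so Theorem~\ref{th:radon} applies and all three conclusions follow at once.

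Alternatively, as the excerpt suggests, one may simply repeat the proof of Theorem~\ref{th:main} in the discrete setting. For a finite sum the function $v:=u^m$ is of class $C^\infty$ on the open set $\R^n\setminus\{a_1,\ldots,a_N\}$, and the same pointwise computation performed under the integral sign in the proof of Theorem~\ref{th:main} (now a finite sum over $j$ rather than an integral against $\rho$) yields the classical inequality $-\triangle_p v\ge 0$ (respectively $\le 0$) on that open set. To promote this to a weak inequality on all of $\R^n$, I would fix a nonnegative $\varphi\in C_0^\infty(\R^n)$, integrate by parts on $\R^n\setminus\bigcup_j B(a_j,\eps)$, and pass to the limit $\eps\to 0$.

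The main obstacle, common to both routes, is the limiting argument near the singular points $a_j$. In the first approach it is already encoded in the approximation argument used to establish Theorem~\ref{th:radon} (one mollifies $\mu$ to obtain continuous $\rho_k$, applies Theorem~\ref{th:main} to each $\rho_k$, and passes to the limit using stability of supersolutions under monotone/locally uniform convergence together with the integrability hypothesis~\eqref{integrability-assumptions}). In the second it reduces to controlling the boundary integrals on $\partial B(a_j,\eps)$ for small~$\eps$. The local behavior $|\nabla v(x)|\lesssim|x-a_j|^{-\alpha m-1}$ near each pole makes each such boundary integral of order $\eps^{n-1-(\alpha m+1)(p-1)}$, and recalling $m=(p-2+q)/(p-1)$, a short computation shows that the borderline exponent $\alpha=(n-p)/(p-2+q)$ is precisely the value for which these contributions cease to vanish yet retain the correct sign to be consistent with the claimed super- or subsolution inequality. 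Thus the admissible range for $\alpha$ in the corollary is exactly what makes either proof route succeed, and no estimate beyond those already exploited in Theorem~\ref{th:main} is needed.
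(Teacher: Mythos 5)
Your proposal is essentially correct and matches the paper's own (one-sentence) justification: the corollary is obtained either by applying Theorem~\ref{th:radon} to the finite atomic measure $\mu=\sum_j A_j'\,\delta_{a_j}$ (absorbing the normalizing constant $c_{n,n-\alpha}$ into the coefficients), or by rerunning the proof of Theorem~\ref{th:main} with finite sums in place of integrals against $\rho$. One small inaccuracy: the growth hypothesis \eqref{eq:growth} is the tail-integrability condition $\int_{\{|y|\ge 1\}}|y|^{-\alpha}\,d\mu(y)<\infty$, not a Frostman-type bound $\mu(B(x,r))\le Cr^s$ as you wrote; for a finite, compactly supported atomic measure it is trivially satisfied in either case, so your conclusion stands.
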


Note that linear combinations of fundamental solutions as in Corollary \ref{cor:main} need not be supersolutions for the operator $-\cL_{p,q}$. Indeed, even the fundamental solution itself does not satisfy the integrability conditions \ref{integrability-assumptions}.

Let us briefly comment on the proof of Theorem \ref{th:main}. As previously mentioned, we first established this result for the stationary porous medium equation. The argument in that case is similar to, but somewhat easier than, the argument in the $p$-Laplace case. In keeping with the overall theme of the paper, our proof of Theorem \ref{th:main} can loosely be described as `a superposition of the $p$-Laplace and stationary PME proofs'. We explain in detail the meaning of this assertion in section \ref{sec:proof}. We expand $\cL_{p,q} u$ for the function $u$ in \eqref{eq:main-u} and write the result as a sum of various expressions of fixed sign, each depending on the values of $n$, $p$ and $q$. This yields the desired conclusion. In order to identify the desired expressions we carefully decompose the expanded form of $\cL_{p,q} u$ into two parts, which we treat independently by the methods used in the $p$-Laplace and stationary PME cases. The main subtlety in the proof lies in showing that this scheme is effective, specifically, that the two terms which we obtain have the same sign. This constraint leads to the restrictions on $p$ and $q$ indicated in the statement of the theorem.

This paper is organized as follows. In section \ref{sec:fs} we identify the fundamental solution for the operator $\cL_{p,q}$. In section \ref{sec:proof} we give the proof of Theorem \ref{th:main} and its counterpart (Theorem \ref{th:radon}) for general Radon measures.

\section{A fundamental solution for the operator $\cL_{p,q}$}\label{sec:fs}

In this section we compute the fundamental solution of the operator $\cL_{p,q}$ defined in \eqref{eq:L}. We first record the following expansion, which is valid for $C^2$ functions $u$:
$$
\cL_{p,q} u = u^{q-2} |\nabla u|^{p-4} \left( u|\nabla u|^2\triangle u + (p-2) u \triangle_\infty u + (q-1) |\nabla u|^4 \right).
$$
Here $\triangle_\infty u$ denotes the $\infty$-Laplacian of $u$, i.e.,
$$
\triangle_\infty u = \tfrac12 \langle \nabla |\nabla u|^2 , \nabla u \rangle = \sum_{j,k} u_j u_k  u_{j,k},
$$
where we have used subscripts to denote partial differentiation.

\begin{proposition}\label{prop:fs}
For $n \ge 3$, $p \ge 2$ and $q>0$, set
\begin{equation}\label{gamma}
\gamma = \frac{p-n}{p-2+q}.
\end{equation}
The fundamental solution of the operator $-\cL_{p,q}$ is given by
$$
u_0(x) = c(n,p,q) |x|^\gamma
$$
if $p \ne n$, for a suitable choice of the constant $c(n,p,q)$. If $p=n$ the fundamental solution is a multiple of $\log(1/|x|)$.
\end{proposition}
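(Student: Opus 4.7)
The most efficient route exploits identity \eqref{p-Laplace-and-L-p-q}, which reduces the question to the well-known fundamental solution of $-\triangle_p$. Since $\triangle_p(u^m) = m^{p-1}\cL_{p,q}(u)$ with $m = (p-2+q)/(p-1)$, a function $u_0$ is (a positive constant multiple of) a fundamental solution for $-\cL_{p,q}$ precisely when $u_0^m$ is (a positive constant multiple of) a fundamental solution for $-\triangle_p$. Recall that the latter is a multiple of $|x|^{(p-n)/(p-1)}$ when $p\ne n$ and of $\log(1/|x|)$ when $p = n$. Setting $u_0^m = c_0 |x|^{(p-n)/(p-1)}$ in the case $p\ne n$ yields $u_0 = c_0^{1/m} |x|^{(p-n)/((p-1)m)}$, whose exponent simplifies to $(p-n)/(p-2+q) = \gamma$, as claimed. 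The case $p = n$ is handled analogously by extracting the $m$th root of a multiple of $\log(1/|x|)$.

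As a sanity check, I would verify directly that $u_0(x) = |x|^\gamma$ is $\cL_{p,q}$-harmonic on $\R^n\setminus\{0\}$. Substituting into the displayed expansion
$$
\cL_{p,q} u = u^{q-2}|\nabla u|^{p-4}\bigl(u|\nabla u|^2 \triangle u + (p-2)u\triangle_\infty u + (q-1)|\nabla u|^4\bigr)
$$
together with the standard identities $|\nabla(|x|^\gamma)|^2 = \gamma^2|x|^{2\gamma-2}$, $\triangle(|x|^\gamma) = \gamma(\gamma+n-2)|x|^{\gamma-2}$, and $\triangle_\infty(|x|^\gamma) = \gamma^3(\gamma-1)|x|^{3\gamma-4}$, the bracket collapses to $\gamma^3|x|^{4\gamma-4}\bigl((p+q-2)\gamma + (n-p)\bigr)$, which vanishes precisely when $\gamma = (p-n)/(p-2+q)$ (discarding the trivial root $\gamma = 0$, which corresponds to the degenerate logarithmic case $p = n$).

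The remaining step is to pin down the normalization constant $c(n,p,q)$. I would compute the flux of $u_0^{q-1}|\nabla u_0|^{p-2}\nabla u_0$ across the sphere $\partial B(0,r)$; by the homogeneity of $u_0$ this flux is independent of $r$, so pairing with a test function $\varphi\in C^\infty_0(\R^n)$ and applying the divergence theorem on $\R^n\setminus B(0,r)$ before letting $r\to 0$ gives $-\cL_{p,q}u_0 = K(n,p,q)\,\delta_0$ distributionally, with $K$ depending only on $n,p,q$. One then chooses $c(n,p,q)$ so that $K = 1$. Equivalently, this constant can be read off from the known $p$-Laplace normalization via \eqref{p-Laplace-and-L-p-q}. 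The only nontrivial part is this flux bookkeeping; everything else is routine algebra.
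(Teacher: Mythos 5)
Your reduction to the $p$-Laplace fundamental solution via the conjugacy \eqref{p-Laplace-and-L-p-q} is precisely the alternate route the paper sketches in the remark immediately following Proposition~\ref{prop:fs}; the paper itself opts for the direct computation that you include only as a ``sanity check.'' Your direct verification is correct and arguably cleaner than the paper's: the paper's displayed formulas (note $\nabla u = -\gamma|x|^{-\gamma-2}x$) are actually those for $|x|^{-\gamma}$, so the computational $\gamma$ there is the negative of the $\gamma$ in \eqref{gamma}; your bookkeeping with $u_0=|x|^\gamma$ avoids the sign flip and lands directly on $(p+q-2)\gamma + (n-p)=0$. The exponent simplification $(p-n)/((p-1)m)=\gamma$ is right, and your flux/divergence-theorem argument for $c(n,p,q)$ is the correct way to pin the normalization and matches the paper's integration-by-parts leading to \eqref{eq:weak-L}.

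The one genuine gap is your treatment of $p=n$. You assert that it is ``handled analogously by extracting the $m$th root of a multiple of $\log(1/|x|)$,'' but the $m$th root of $c\log(1/|x|)$ is $(c\log(1/|x|))^{1/m}$, which is \emph{not} a constant multiple of $\log(1/|x|)$ unless $m=1$, i.e.\ $q=1$. In fact a direct computation gives
\begin{equation*}
\cL_{n,q}\left(\log(1/|x|)\right) = (q-1)\,\left(\log(1/|x|)\right)^{q-2}|x|^{-n},
\end{equation*}
which does not vanish away from the origin when $q\ne 1$. So your own conjugacy argument shows the fundamental solution for $-\cL_{n,q}$ should be $(\log(1/|x|))^{1/m}$ with $m=(n-2+q)/(n-1)$, not $\log(1/|x|)$ itself; the statement you were asked to prove is imprecise here (and the paper's own proof never actually addresses $p=n$), but ``handled analogously'' papers over a real discrepancy rather than resolving it. You should either flag the inconsistency explicitly or restrict the conclusion to $q=1$ when $p=n$.
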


\begin{proof}
We first show that $\cL_{p,q} u_0 = 0$ in the complement of the origin. We compute
$$
\nabla u = -\gamma |x|^{-\gamma-2} x,
$$
$$
|\nabla u|^2 = \gamma^2 |x|^{-2\gamma-2},
$$
$$
\triangle u = \gamma(\gamma+2-n)|x|^{-\gamma-2},
$$
and
$$
\triangle_\infty u = \gamma^3(\gamma+1)|x|^{-3\gamma-4}.
$$
Hence
\begin{equation*}\begin{split}
&u|\nabla u|^2\triangle u + (p-2) |\nabla u|^{p-4} \triangle_\infty u + (q-1) |\nabla u|^4 \\
& \qquad = \left\lbrace \gamma^3(\gamma+2-n)+\gamma^3(\gamma+1)(p-2)+\gamma^4(q-1) \right\rbrace |x|^{-4\gamma-4}
\end{split}\end{equation*}
and we note that the expression inside the braces vanishes precisely when $\gamma$ is as in \eqref{gamma}.

To complete the proof we show that an appropriate choice of the constant $c(n,p,q)$ ensures that
$$
-\cL_{p,q} u_0 = \delta_0
$$
in the sense of distributions, where $\delta_0$ denotes the Dirac distribution with pole at the origin. Integrating $-\cL_{p,q} u_0$ by parts against a test function $\varphi \in C^\infty_0(\R^n)$ yields
$$
c(n,p,q)^{p+q-2} \gamma^{p-1} \int_{\R^n} |x|^{\gamma(p+q-2)-p+1} \langle x,\nabla \varphi(x)\rangle \, dx.
$$
Substituting the value of $\gamma$ indicated in \eqref{gamma} gives
\begin{equation}\label{eq:weak-L}
c(n,p,q)^{p+q-2} \gamma^{p-1} \int_{\R^n} |x|^{1-n} \langle x,\nabla \varphi(x)\rangle \, dx
\end{equation}
and we recognize the integral in \eqref{eq:weak-L} as coinciding with the action of the distribution $-\triangle((2-n)^{-1}|x|^{2-n})$ on $\varphi$. An explicit value for the constant $c(n,p,q)$ can be computed from \eqref{eq:weak-L} and the value of the constant $c_{n,2}$ in \eqref{eq:Riesz}.
\end{proof}

\begin{remark}
Proposition \ref{prop:fs} can also be derived by using the relationship between solutions for $\cL_{p,q}$ and powers of solutions for $\triangle_p$, and the known fundamental solution for the $p$-Laplacian.
\end{remark}

\section{Proof of Theorem \ref{th:main}}\label{sec:proof}

Since $p \ge 2$, supersolutions $u$ for the operator $-\cL$ are characterized by the pointwise inequality $-\cL u \ge 0$. We will compute the expression
\begin{equation}\label{term-in-L}
I := u|\nabla u|^2\triangle u + (p-2) u \triangle_\infty u + (q-1) |\nabla u|^4
\end{equation}
and will show that it has the correct sign depending on the values of $p$ and $q$ as indicated. Note that $\gamma>0$ in the first case, while $\gamma<0$ in the second case.

We drop the immaterial constant $c_{n,\alpha}$ and consider
$$
u(x) = \int \frac{\rho(y)\,dy}{|x-y|^\alpha}
$$
in cases (1) and (2). Case (3) (the logarithmic case) is similar and will be left to the reader.

We will compute formally by differentiating under the integral sign. The ensuing computations are justified by the following remarks. First, since $\rho$ is compactly supported there are no convergence problems at infinity. Moreover, the singularity at $y=x$ is sufficiently mild so that the interchange of limits is justified. In fact, we always have $\alpha < n-2$. Note that $\alpha<0$ in case (2) so it suffices to consider case (1) ($2\le p<n$ and $q\ge 1$). But in that case we have
\begin{equation*}\begin{split}
n-2-\alpha &\ge (n-2) - \frac{n-p}{p-2+q} \\ &= \frac{n(p+q-3)-(p+2q-4)}{p-2+q} \\ &> \frac{2(p+q-3)-(p+2q-4)}{p-2+q} = \frac{p-2}{p-2+q} \ge 0.
\end{split}\end{equation*}
The preceding remarks understood we proceed to compute various derivatives of $u$. First,
$$
u_j(x) = -\alpha \int \frac{(x_j-y_j)\rho(y)\,dy}{|x-y|^{\alpha+2}}
$$
and
$$
\nabla u(x) = -\alpha \int \frac{(x-y) \rho(y)\,dy}{|x-y|^{\alpha+2}}.
$$
Thus
$$
|\nabla u(x)|^2 = \alpha^2 \iint \frac{\langle x-y,x-z\rangle \rho(y)\rho(z)\,dy\,dz}{|x-y|^{\alpha+2}|x-z|^{\alpha+2}}.
$$
Next we compute second derivatives
$$
u_{j,k}(x) = -\alpha \int \frac{\delta_{jk} \rho(y)\,dy}{|x-y|^{\alpha+2}} + \alpha(\alpha+2) \iint \frac{(x_j-y_j)(x_k-y_k)\rho(y)\,dy}{|x-y|^{\alpha+4}}
$$
where $\delta_{jk}$ denotes the Kronecker delta. The Laplacian of $u$ is then given by
$$
\triangle u = \alpha(\alpha+2-n) \int \frac{\rho(y)\,dy}{|x-y|^{\alpha+2}}
$$
and the $\infty$-Laplacian of $u$ is given by
\begin{equation*}\begin{split}
\triangle_\infty u
&= \sum_{j,k} \left( -\alpha^3 \iiint \frac{\delta_{jk}(x_j-y_j)(x_k-z_k)\rho(y)\rho(z)\rho(v)\,dy\,dz\,dv}{|x-y|^{\alpha+2}|x-z|^{\alpha+2}|x-v|^{\alpha+2}} \right. \\
& \quad \left. + \alpha^3(\alpha+2) \iiint \frac{(x_j-y_j)(x_k-z_k)(x_j-v_j)(x_k-v_k) \rho(y)\rho(z)\rho(v)\,dy\,dz\,dv}{|x-y|^{\alpha+2}|x-z|^{\alpha+2}|x-v|^{\alpha+4}} \right) \\
&= -\alpha^3 \iiint \frac{\langle x-y,x-z\rangle \rho(y)\rho(z)\rho(v)\,dy\,dz\,dv}{|x-y|^{\alpha+2}|x-z|^{\alpha+2}|x-v|^{\alpha+2}} \\
& \quad + \alpha^3(\alpha+2) \iiint \frac{\langle x-y,x-v\rangle \, \langle x-z,x-v\rangle \rho(y)\rho(z)\rho(v)\,dy\,dz\,dv}{|x-y|^{\alpha+2}|x-z|^{\alpha+2}|x-v|^{\alpha+4}}
\end{split}\end{equation*}
The expression $I$ in \eqref{term-in-L} can now be written as a sum of quadruple integrals:
\begin{equation*}\begin{split}
I &= \alpha^3(\alpha+4-n-p) \iiiint \frac{\langle x-z,x-v\rangle dV}{|x-y|^\alpha |x-z|^{\alpha+2} |x-v|^{\alpha+2} |x-w|^{\alpha+2}} \\
& \quad + (p-2)\alpha^3(\alpha+2) \iiiint \frac{\langle x-z,x-w\rangle \, \langle x-v,x-w\rangle
dV}{|x-y|^\alpha |x-z|^{\alpha+2} |x-v|^{\alpha+2} |x-w|^{\alpha+4}} \\
& \quad + (q-1) \alpha^4 \iiiint \frac{\langle x-y,x-w\rangle \, \langle x-z,x-v \rangle
dV}{|x-y|^{\alpha+2} |x-z|^{\alpha+2} |x-v|^{\alpha+2} |x-w|^{\alpha+2}} \\
& = I_1 + I_2 + I_3
\end{split}\end{equation*}
where we abbreviated $dV = \rho(y)\rho(z)\rho(v)\rho(w) \, dy \, dz \, dv \, dw$.

Observe that each of the quadruple integrals in the preceding equation is nonnegative. Indeed, the first quadruple integral is equal to
$$
\int \frac{\rho(y)\,dy}{|x-y|^\alpha} \int \frac{\rho(w)\,dw}{|x-w|^{\alpha+2}} \left| \int \frac{(x-z)\rho(z)\,dz}{|x-z|^{\alpha+2}} \right|^2
$$
and the other two expressions can be written in similar fashion. If $\alpha \le -2$ then all three coefficients in front of these integrals are nonnegative and hence $I \ge 0$. This means that in case (2) of the main theorem we may assume without loss of generality that
\begin{equation}\label{eq:alpha-restriction}
\alpha > -2.
\end{equation}
We now fix
\begin{equation}\label{eq:lambda}
\lambda = -(p-2)(\alpha+2),
\end{equation}
noting by \eqref{standing-assumptions} and \eqref{eq:alpha-restriction} that $\lambda\le 0$,
and we write $I_1$ as the sum of two terms $I_{1,1}$ and $I_{1,2}$, where $I_{1,1}$ has the coefficient $\alpha+4-n-p$ replaced by $\alpha+4-n-p-\lambda$ and $I_{1,2}$ has that same coefficient replaced by $\lambda$. In $I_{1,1}$ we interchange the roles of the variables $y$ and $w$ and average the result with $I_{1,1}$ itself. This does not affect the value of $I_{1,1}$. We then write
$$
I = (I_{1,1} + I_3) + (I_{1,2} + I_2)
$$
where
\begin{equation}\begin{split}\label{I11I3}
&I_{1,1} + I_3 \\
&\quad = \tfrac12\alpha^3(\alpha+4-n-p-\lambda) \iiiint \frac{\langle x-z,x-v\rangle dV}{|x-y|^\alpha |x-z|^{\alpha+2} |x-v|^{\alpha+2} |x-w|^{\alpha+2}} \\
&\qquad + \tfrac12\alpha^3(\alpha+4-n-p-\lambda) \iiiint \frac{\langle x-z,x-v\rangle dV}{|x-y|^{\alpha+2} |x-z|^{\alpha+2} |x-v|^{\alpha+2} |x-w|^{\alpha}} \\
&\quad\qquad + (q-1) \alpha^4 \iiiint \frac{\langle x-y,x-w\rangle \, \langle x-z,x-v \rangle
dV}{|x-y|^{\alpha+2} |x-z|^{\alpha+2} |x-v|^{\alpha+2} |x-w|^{\alpha+2}} \\
\end{split}\end{equation}
and
\begin{equation}\begin{split}\label{I12I2}
&I_{1,2} + I_2 \\
& \quad = \alpha^3 \lambda \iiiint \frac{\langle x-z,x-v\rangle dV}{|x-y|^\alpha |x-z|^{\alpha+2} |x-v|^{\alpha+2} |x-w|^{\alpha+2}} \\
& \qquad + (p-2)\alpha^3(\alpha+2) \iiiint \frac{\langle x-z,x-w\rangle \, \langle x-v,x-w\rangle dV}{|x-y|^\alpha |x-z|^{\alpha+2} |x-v|^{\alpha+2} |x-w|^{\alpha+4}} \\
& \quad = \alpha^3 \lambda \iiiint \frac{(|x-w|^2 \, \langle x-z,x-v\rangle - \langle x-z,x-w\rangle \, \langle x-v,x-w\rangle) \, dV}{|x-y|^\alpha |x-z|^{\alpha+2} |x-v|^{\alpha+2} |x-w|^{\alpha+2}}.
\end{split}\end{equation}
Let us first analyze the expression in \eqref{I12I2}. We recall the identity $|U|^2\langle V,W\rangle - \langle U,V\rangle \, \langle U,W\rangle = \langle U\wedge V,U\wedge W \rangle$, where we have introduced the induced inner product on the space $\Lambda^2\R^n$ of alternating two-vectors. We obtain
$$
I_{1,2}+I_2 = \alpha^3 \lambda \iiiint \frac{\langle (x-z)\wedge(x-w),(x-v)\wedge(x-w) \rangle \, dV}{|x-y|^\alpha |x-z|^{\alpha+2} |x-v|^{\alpha+2} |x-w|^{\alpha+2}}
$$
which in turn equals
$$
\alpha^3 \lambda \int \frac{\rho(y)\,dy}{|x-y|^{\alpha+2}} \int \frac{\rho(w)}{|x-w|^{\alpha+2}} \left| \int \frac{\rho(z) (x-z)\wedge(x-w)}{|x-w|^{\alpha+2}} \, dz\right|^2 \, dw.
$$

In case (1) ($2<p<n$ and $q\ge 1$) we have $\lambda\le 0$ and $\alpha>0$. In this case, $I_{1,2}+I_2$ is nonpositive.

In case (2) ($p>n$ and $0<q\le 1$) we have $\lambda\le 0$ and $\alpha<0$. In this case, $I_{1,2}+I_2$ is nonnegative.

We now turn to the expression in \eqref{I11I3}. Substituting the value of $\lambda$ given in \eqref{eq:lambda} yields
\begin{equation*}\begin{split}
&I_{1,1} + I_3 \\
&\quad = \tfrac12\alpha^3(\alpha(p-1)+(p-n)) \iiiint \frac{\langle x-z,x-v\rangle dV}{|x-y|^\alpha |x-z|^{\alpha+2} |x-v|^{\alpha+2} |x-w|^{\alpha+2}} \\
&\qquad + \tfrac12\alpha^3(\alpha(p-1)+(p-n)) \iiiint \frac{\langle x-z,x-v\rangle dV}{|x-y|^{\alpha+2} |x-z|^{\alpha+2} |x-v|^{\alpha+2} |x-w|^{\alpha}} \\
&\quad\qquad + (q-1) \alpha^4 \iiiint \frac{\langle x-y,x-w\rangle \, \langle x-z,x-v \rangle
dV}{|x-y|^{\alpha+2} |x-z|^{\alpha+2} |x-v|^{\alpha+2} |x-w|^{\alpha+2}} \\
\end{split}\end{equation*}
Since
$$
\alpha^3(\alpha(p-1)+(p-n)) + (q-1)\alpha^4 = \alpha^3(\alpha(p-2+q)+(p-n))
$$
we may rewrite this as follows:
\begin{equation*}\begin{split}
&I_{1,1} + I_3 \\
&\quad = \tfrac12\alpha^3(\alpha(p-2+q)+(p-n)) \iiiint \frac{\langle x-z,x-v\rangle dV}{|x-y|^\alpha |x-z|^{\alpha+2} |x-v|^{\alpha+2} |x-w|^{\alpha+2}} \\
&\qquad + \tfrac12\alpha^3(\alpha(p-2+q)+(p-n)) \iiiint \frac{\langle x-z,x-v\rangle dV}{|x-y|^{\alpha+2} |x-z|^{\alpha+2} |x-v|^{\alpha+2} |x-w|^{\alpha}} \\
&\quad\qquad - \tfrac12(q-1) \alpha^4 \iiiint \frac{|y-w|^2 \langle x-z,x-v \rangle dV}{|x-y|^{\alpha+2} |x-z|^{\alpha+2} |x-v|^{\alpha+2} |x-w|^{\alpha+2}}.
\end{split}\end{equation*}

In case (2) we have
$$
0<q\le 1, \quad \alpha<0, \quad \alpha(p-1)+(p-n)\le 0, \quad \mbox{and} \quad \alpha(p-2+q)+(p-n) \le 0.
$$
Hence all three terms are nonnegative.

In case (1) we have
$$
q\ge 1, \quad \alpha>0, \quad \alpha(p-1)+(p-n)\le 0, \quad \mbox{and} \quad \alpha(p-2+q)+(p-n)) \le 0.
$$
Hence all three terms are nonpositive.

Combining all of the preceding conclusions, we see that $\cL_{p,q} u$ is nonnegative or nonpositive as indicated in the statement of the theorem. The proof of Theorem \ref{th:main} is complete. \qed

\

In Theorem \ref{th:main} we considered measures $\rho(y)\,dy$ absolutely continuous with respect to the Lebesgue measure. We now show how to extend the conclusion to Riesz potentials of general Radon measures $\mu$ satisfying the growth condition
\begin{equation}\label{eq:growth}
\int_{\{y:|y| \ge 1\}}\frac{d\mu(y)}{|y|^\alpha} < \infty.
\end{equation}
To this end, we make use of the connection between solutions for the operator $\cL_{p,q}$ and solutions for the $p$-Laplace operator $\triangle_p$. We denote by $I_\alpha(\mu)$ the Riesz potential of $\mu$, defined to be
$$
I_\alpha(\mu)(x) :=  c_{n,\alpha} \int_{\R^n} \frac{d\mu(y)}{|x-y|^{n-\alpha}},
$$
where $c(n,\alpha)$ is defined as in the introduction. As discussed in the introduction, we formulate the conclusion in terms of the $p$-superharmonicity or $p$-subharmonicity of a power of $I_{n-\alpha}(\mu)$.

\begin{theorem}\label{th:radon}
Let $\mu$ be a Radon measure on $\R^n$ satisfying the growth condition \eqref{eq:growth} for some $\alpha$.
\begin{enumerate}
\item If $2\le p<n$ and $q\ge 1$ then
\begin{equation*}
(I_{n-\alpha}(\mu))^m
\end{equation*}
is $p$-superharmonic if
$$
0<\alpha \le \frac{n-p}{p-2+q}.
$$
\item If $p>n$ and $0<q\le 1$ then $(I_{n-\alpha}(\mu))^m$ is $p$-subharmonic if
$$
-\alpha \ge \frac{p-n}{p-2+q}.
$$
If $p=\infty$ we may choose $-\alpha \ge 1$.
\item If $p=n$ and $q>0$ then $(\int \log|x-y| \, d\mu(y))^m$ is $n$-superharmonic.
\end{enumerate}
\end{theorem}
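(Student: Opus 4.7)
The plan is to reduce Theorem \ref{th:radon} to Theorem \ref{th:main} by an approximation argument, using the equivalence noted in the introduction that a nonnegative function $u$ is a supersolution for $-\cL_{p,q}$ if and only if $u^m$ is $p$-superharmonic (and analogously for subsolutions).

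The first step is to produce a sequence $(\mu_k)$ of Radon measures with nonnegative compactly supported continuous densities $\rho_k$ such that $\mu_k \nearrow \mu$ as measures. A natural construction is $\rho_k := \eta_{1/k} \ast (\chi_{B(0,k)}\mu)$, where $\eta$ is a standard nonnegative smooth mollifier; by passing to a suitable diagonal subsequence one arranges that the associated Riesz potentials $u_k := I_{n-\alpha}(\mu_k)$ converge pointwise monotonically upward to $u := I_{n-\alpha}(\mu)$. Since the kernel $|x-y|^{\alpha - n}$ is positive, the monotone convergence theorem applies directly to the measure truncation, and the mollification may be handled by a further diagonalization. The growth condition \eqref{eq:growth} guarantees that $u$ is finite on a dense set, so the limit is nontrivial. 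Because $m > 0$, monotonicity is preserved under raising to the $m$-th power, giving $u_k^m \nearrow u^m$ pointwise.

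For each $k$, Theorem \ref{th:main} yields that $u_k^m$ is $p$-superharmonic in case (1), $p$-subharmonic in case (2), and $n$-superharmonic in case (3). To conclude, I would invoke the classical stability theorem from nonlinear potential theory (cf.\ Heinonen--Kilpel\"ainen--Martio): a pointwise monotone increasing limit of $p$-superharmonic functions is either identically $+\infty$ or itself $p$-superharmonic, and the analogous statement for $p$-subharmonic functions follows either directly from stability of the weak subsolution property in $W^{1,p}_{\loc}$ or by passing to negatives and invoking the corresponding decreasing-limit stability result. The logarithmic case (3) is treated in parallel, after approximating $\log|x-y|$ from below by bounded truncations, applying Theorem \ref{th:main}(3) at each stage, and passing to the monotone limit.

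The main subtlety I anticipate is in case (2), where $n - \alpha > n$ so the Riesz kernel is locally more singular than $|x-y|^{-n}$. For a generic Radon measure satisfying only \eqref{eq:growth}, the potential $u$ may be infinite on a set of positive $p$-capacity, and care is required to verify that the monotone-limit stability theorem applies in this setting, possibly after lower semicontinuous regularization of the limit and after checking that the growth condition is compatible with the regularity requirements (local boundedness, local Sobolev integrability) built into the notion of $p$-subharmonicity.
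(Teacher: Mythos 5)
Your proposal follows the same overall route as the paper: regularize $\mu$, apply Theorem~\ref{th:main} to the regularization, and pass to the limit via a stability result from nonlinear potential theory. The tactical differences are in the choice of mollifier and in the limit theorem invoked. The paper convolves $\mu$ with the heat kernel and extracts a sequence $t_k\searrow 0$ so that $(u_{t_k})^m\to u^m$ a.e., then cites Theorem~1.17 of \cite{km:degenerate}; you truncate $\mu$ to $B(0,k)$ and mollify with a compactly supported kernel, aiming for monotone convergence and the increasing-limit stability theorem from Heinonen--Kilpel\"ainen--Martio. Your choice has the minor advantage of producing compactly supported continuous densities $\rho_k$, so Theorem~\ref{th:main} applies literally, whereas the paper's $\rho_t$ is not compactly supported (the Gaussian has full support) and one must implicitly observe that the proof of Theorem~\ref{th:main} only needs sufficient decay of $\rho$.

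Two points need repair. First, your monotonicity claim is not uniform across the cases. In case~(1) the kernel $|x-y|^{-\alpha}$ with $0<\alpha<n-2$ is superharmonic, so mollification of $u$ by a radial decreasing kernel increases as the mollification parameter shrinks, and combined with the monotone truncation a diagonal sequence is indeed increasing. But in case~(2) one has $\alpha<0$, the kernel $|x-y|^{|\alpha|}$ is \emph{subharmonic}, and mollification therefore \emph{decreases} as the parameter shrinks; the diagonal $u_{k,k}$ is then not monotone. You would need a two-step limit (first the mollification parameter, then the truncation parameter), applying the decreasing-limit stability for $p$-subharmonic functions in the first step and upper-semicontinuous regularization in the second, using the continuity of $u$ to identify the regularization with $u^m$ itself. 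Second, your concern in the final paragraph about the Riesz kernel in case~(2) being ``locally more singular than $|x-y|^{-n}$'' has the sign backwards: since $\alpha<0$, the kernel $|x-y|^{-\alpha}$ is locally bounded (indeed smooth away from zero and continuous everywhere), so the potential is finite and continuous wherever the growth condition \eqref{eq:growth} controls the tail; there is no positive-capacity set of infinities to worry about, and the subtlety in case~(2) is at infinity, not locally.
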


Recall that $m$ is the parameter defined in \eqref{m}.

\begin{proof}
We follow a line of reasoning similar to that used by Lindqvist and Manfredi (pp.\ 137--138 in \cite{lm:remarkable}). We regularize $\mu$ by convolving with the heat kernel. For $t>0$ define
$$
\rho_t(y) = \frac{1}{(4\pi t)^{n/2}} \int e^{-\tfrac{|y-\xi|^2}{4t}} \, d\mu(\xi)
$$
and we consider
$$
u_t = I_{n-\alpha}(\rho_t).
$$
For simplicity we restrict attention to the case when
$$
2\le p < n \qquad \mbox{and} \qquad q \ge 1 \qquad \mbox{and} \qquad \alpha = \frac{n-p}{p-2+q}.
$$
By Theorem \ref{th:main}, $u_t$ is a $C^2$ supersolution for the operator $-\cL_{p,q}$. Alternatively, as previously discussed, $(u_t)^m$ is a supersolution for the $p$-Laplace operator, i.e., $(u_t)^m$ is $p$-superharmonic. Passing to a sequence $(t_k)$ with $t_k\searrow 0$, we have that $(u_{t_k})^m \to u^m$ a.e. By general results from the nonlinear potential theory of the $p$-Laplacian (see, for instance, Theorem 1.17 in \cite{km:degenerate}), the limit function $u^m$ is $p$-superharmonic. The remaining cases are handled similarly. This completes the proof.
\end{proof}

\begin{remark}\label{rem:h-type}
Our setting in this paper has been Euclidean space. We expect that similar conclusions should hold for quasilinear subelliptic equations of the form
\begin{equation}\label{step-two-groups}
\diver_H(u^{q-1}|\nabla_H u|^{p-2}\nabla_H u) = 0
\end{equation}
in step two Carnot groups of Heisenberg type, for instance, in the Heisenberg group itself. See \cite{gt:h-type} for the special case of the $p$-Laplace equation
$$
\diver_H(|\nabla_H u|^{p-2}\nabla_H u) = 0.
$$
Here $\nabla_H u$ denotes the horizontal gradient and $\diver_H V$ denotes the horizontal divergence of a horizontal vector field. We leave the analysis of \eqref{step-two-groups} in Heisenberg-type Carnot groups to a future paper.
\end{remark}

\bibliographystyle{acm}
\bibliography{references}
\end{document}